\numberwithin{equation}{section}
\newtheorem{theorem}{Theorem}[section]
\newtheorem{lemma}{Lemma}[section]
\newtheorem{remark}{Remark}[section]
\begin{document}
\title{\Large \bfseries New rigorous perturbation bounds for the generalized Cholesky factorization\thanks{The work is supported by the National Natural Science Foundation of China (Grant Nos. 11201507; 11171361).}}
\author{{Hanyu Li\thanks{Corresponding author. Email addresses: lihy.hy@gmail.com or hyli@cqu.edu.cn; yangyanfei2008@yeah.net}, Yanfei Yang}}
\date{\small {College of Mathematics and Statistics, Chongqing University,
Chongqing, 401331, P. R. China}} \maketitle
{\raggedleft\bfseries{\normalsize Abstract}}\\

Some new rigorous perturbation bounds for the generalized Cholesky factorization with normwise or componentwise perturbations in the given matrix are obtained, where the componentwise perturbation has the form of backward rounding error for the generalized Cholesky factorization algorithm. These bounds can be much tighter than some existing ones while the conditions for them to hold are simple and moderate.


{\raggedleft \em AMS classification:}\ 15A23; 15A45\\

{\raggedleft \em Keywords:}  Generalized Cholesky factorization; Rigorous perturbation bound; Normwise perturbation; Componentwise perturbation

\section{Introduction}
Let $\mathbb{R}^{m \times n} $ be the set of $m \times n$ real matrices and
$\mathbb{R}^{m\times n}_r $ be the subset of $\mathbb{R}^{m \times n} $ consisting of
matrices with rank $r$. Let $I_r$ be the identity matrix of order $r$ and $A^T$ be the transpose of the matrix $A$.

Consider the following block matrix
\begin{eqnarray}\label{eq:1}
K = \left[ {\begin{array}{*{20}c}
   A & {B^T }  \\
   B & { - C}  \\
\end{array}} \right]\in\mathbb{R}^{(m+n) \times (m+n)},
\end{eqnarray}
where $A \in \mathbb{R}_m^{m \times m}$ is symmetric positive definite, $B \in \mathbb{R}^{n \times m}_n$, and $C \in \mathbb{R}^{n \times n}$ is symmetric positive semi-definite. For this matrix, there always exists the following factorization
\begin{eqnarray}
K = LJ_{m + n} L^T,
\end{eqnarray}
where
\begin{eqnarray*}
L = \left[ {\begin{array}{*{20}c}
   {L_{11} } & 0  \\
   {L_{21} } & {L_{22} }  \\
\end{array}} \right],\quad
J_{m + n}  = \left[ {\begin{array}{*{20}c}
   {I_m } & 0  \\
   0 & { - I_n }  \\
\end{array}} \right],
\end{eqnarray*}
$L_{11}  \in \mathbb{R}^{m \times m}_m$ and $L_{22}  \in \mathbb{R}^{n \times n}_n$ are lower triangular, and $L_{21}  \in \mathbb{R}_n^{n \times m}$. The factorization (1.2) is called the generalized Cholesky factorization and $L$ is referred to as the generalized Cholesky factor [1]. If the diagonal elements of the lower triangular matrices $L_{11}$ and $L_{22}$ are positive, the factorization is unique.

For the generalized Cholesky factorization, some scholars considered its applications, algorithms, algorithms' numerical stability, and perturbation analysis [1--6]. Several first-order perturbation bounds were presented [2, 4--6]. Since, in some cases, it is unclear whether the first-order bound is a good approximate bound as it ignores the higher-order terms, we have to be careful to use them in practice. Oppositely, the rigorous perturbation bounds can be used safely for all cases. So it is important to derive the rigorous bounds for the generalized Cholesky factorization. At present, some rigorous bounds have been given for this factorization [3, 4, 6]. However, these bounds are either quite loose or derived under more restrictive conditions or expensive to compute. The rigorous bounds derived in this paper, using the combination of the classic and refined matrix equation approaches [7], overcome these disadvantages to some extent. They can be much tighter than some existing bounds while the conditions for them to hold are simple and moderate.

The rest of this paper is organized as follows. Section 2 presents some notation and basics. The rigorous perturbation bounds with normwise or componentwise perturbations are given in Sections 3 and 4, respectively. Finally, the concluding remarks of the whole paper is provided.

\section{ Notation and basics}

Given a matrix $A \in \mathbb{R}_r^{m \times n}$, $\left\| A \right\|_2$ and $\left\| A \right\|_F$ stand for its spectral norm and Frobenius norm, respectively. From [8, pp. 80], we have
\begin{eqnarray}
\left\| {XYZ} \right\|_2  \le \left\| X \right\|_2 \left\| Y \right\|_2 \left\| Z \right\|_2 ,\quad \ \left\| {XYZ} \right\|_F  \le \left\| X \right\|_2 \left\| Y \right\|_F \left\| Z \right\|_2,
\end{eqnarray}
whenever the matrix product $XYZ$ is defined. If $A$ is nonsingular, we denote its standard condition number by $\kappa_2(A) = \left\| {A^{ - 1} } \right\|_2 \left\| A \right\|_2$ and Bauer--Skeel condition number by ${\rm{cond}}_F(A) = \left\| {| {{A^{ - 1}}} |} {| A |} \right\|_F$ [8, pp. 128]. Here, for any matrix $X=(x_{ij})$, $|X|$ is defined by $|X|=(|x_{ij}|)$.

For any matrix $A = \left( {a_{ij} } \right) \in \mathbb{R}^{n \times n}$, define
\begin{eqnarray*}
{\rm{up}}\left( A \right) = \left[ {\begin{array}{*{20}c}
   {\frac{1}{2}a_{11} } & {a_{12} } &  \cdots  & {a_{1n} }  \\
   0 & {\frac{1}{2}a_{22} } &  \cdots  & {a_{2n} }  \\
    \vdots  &  \vdots  &  \ddots  &  \vdots   \\
   0 & 0 &  \cdots  & {\frac{1}{2}a_{nn} }  \\
\end{array}} \right].
\end{eqnarray*}
Obviously,
\begin{eqnarray}
\left\| {{\rm{up}}\left( A \right)} \right\|_F  \le \left\| A \right\|_F.
\end{eqnarray}
If $A^T  = A$, from [9], we have
\begin{eqnarray}
\left\| {{\rm{up}}\left( A \right)} \right\|_F  \le \frac{1}{{\sqrt 2 }}\left\| A \right\|_F.
\end{eqnarray}
Moreover, let $\mathbb{D}_{n}  \in \mathbb{R}^{n \times n}$ be the set of $n \times n$ positive definite diagonal matrices. Then, for any $D_{n}  \in \mathbb{D}_{n}$,
\begin{eqnarray}
{\mathop{\rm up}\nolimits} \left( {AD_{n} } \right) = {\mathop{\rm up}\nolimits} \left( A \right)D_{n}.
\end{eqnarray}

The following lemma is needed later in this paper, which is taken from [7].

\begin{lemma}
Let $a,b > 0$, and $c(\cdot)$ be a continuous function of a parameter $t \in \left[ {0,1} \right]$ such that $
b^2  - 4ac\left( t \right) > 0$ holds for all $t$. Suppose that a continuous function $x\left( t \right)$ satisfies the quadratic inequality $ax^2\left( t \right)  - bx\left( t \right) + c\left( t \right) \ge 0$. If $c\left( 0 \right) = x\left( 0 \right) = 0$, then
\begin{eqnarray*}
x\left( 1 \right) \le \frac{1}{{2a}}\left( {b - \sqrt {b^2  - 4ac\left( 1 \right)} } \right).
\end{eqnarray*}
\end{lemma}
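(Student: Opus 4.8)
The plan is to reduce the differential-looking statement to a purely algebraic observation about the roots of a quadratic, and then close the argument with connectedness of the interval $[0,1]$. Since $a>0$ and the discriminant $b^2-4ac(t)>0$ for every $t$, the quadratic $a\xi^2-b\xi+c(t)$ in the variable $\xi$ has two distinct real roots
\[
\xi_{-}(t)=\frac{b-\sqrt{b^2-4ac(t)}}{2a},\qquad
\xi_{+}(t)=\frac{b+\sqrt{b^2-4ac(t)}}{2a},
\]
with $\xi_{-}(t)<\xi_{+}(t)$, and (because the leading coefficient is positive) the inequality $a\xi^2-b\xi+c(t)\ge 0$ holds precisely when $\xi\le\xi_{-}(t)$ or $\xi\ge\xi_{+}(t)$. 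Thus the hypothesis $ax^2(t)-bx(t)+c(t)\ge 0$ says exactly that, for each $t$, the value $x(t)$ lies on one of the two branches: either $x(t)\le\xi_{-}(t)$ or $x(t)\ge\xi_{+}(t)$, never strictly between them.

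First I would pin down the left endpoint. Since $c(0)=0$, the two roots degenerate to $\xi_{-}(0)=0$ and $\xi_{+}(0)=b/a>0$, and the assumption $x(0)=0$ places the trajectory on the lower branch, $x(0)=\xi_{-}(0)$, strictly below $\xi_{+}(0)$. The goal is then to show that $x$ can never leave the lower branch, so that in particular $x(1)\le\xi_{-}(1)$, which is the claimed bound.

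The main step is a splitting argument. I would set $S_{-}=\{t\in[0,1]:x(t)\le\xi_{-}(t)\}$ and $S_{+}=\{t\in[0,1]:x(t)\ge\xi_{+}(t)\}$. Continuity of $x$ and of $c$, hence of $\xi_{\pm}$, makes both sets closed, and the quadratic inequality guarantees $S_{-}\cup S_{+}=[0,1]$. They are disjoint, since lying in both would force $\xi_{+}(t)\le x(t)\le\xi_{-}(t)$, contradicting $\xi_{-}(t)<\xi_{+}(t)$. As $[0,1]$ is connected and $0\in S_{-}$ shows $S_{-}\neq\emptyset$, we must have $S_{+}=\emptyset$ and $S_{-}=[0,1]$; evaluating at $t=1$ yields $x(1)\le\xi_{-}(1)$, as required.

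The main obstacle — and the reason the hypothesis demands the strict discriminant condition $b^2-4ac(t)>0$ rather than merely $\ge 0$ — is exactly the disjointness of $S_{-}$ and $S_{+}$. If the discriminant were permitted to vanish at some $t$, the two branches $\xi_{-}$ and $\xi_{+}$ would touch, the forbidden gap between them would pinch shut, and a continuous trajectory could slip from the lower root onto the upper one, invalidating the conclusion. Keeping that gap open for all $t$ is what lets connectedness do the work; the remainder is a routine repackaging of the intermediate value theorem.
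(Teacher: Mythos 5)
Your proof is correct. Note that the paper itself offers no proof of this lemma: it is quoted verbatim from reference [7] (Chang and Stehl\'{e}), where the argument is a continuity argument of exactly the kind you give --- the quadratic inequality forces $x(t)$ onto one of the two root branches, the strictly positive discriminant keeps the branches separated, and continuity plus $x(0)=\xi_{-}(0)=0$ pins $x$ to the lower branch. Your formalization via the two closed sets $S_{-}$, $S_{+}$ and connectedness of $[0,1]$ is a clean and rigorous way to package that argument, and your closing remark correctly identifies the role of the strict inequality $b^2-4ac(t)>0$: it is precisely what keeps $S_{-}$ and $S_{+}$ disjoint, without which the conclusion can fail.
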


\section{Rigorous perturbation bounds with normwise perturbation}

The main theorem, similar to Theorem 3.1 of [7], which is concerned with the regular Cholesky factorization, is presented as follows.

\begin{theorem}
Let $K \in \mathbb{R}^{\left( {m + n} \right) \times \left( {m + n} \right)}$ be expressed as in {\rm(1.1)} and factorized as in {\rm(1.2)}.
Let $\Delta K \in \mathbb{R}^{\left( {m + n} \right) \times \left( {m + n} \right)}$ be symmetric. If
\begin{eqnarray}
\left\| {L^{ - 1} } \right\|_2^2 \left\| {\Delta K} \right\|_F  < \frac{1}{2},
\end{eqnarray}
then $K + \Delta K$ has the following generalized Cholesky factorization
\begin{eqnarray}
K + \Delta K = \left( {L + \Delta L} \right)J_{m + n} \left( {L + \Delta L} \right)^T.
\end{eqnarray}
Moreover,
\begin{eqnarray}
&&\left\| {\Delta L} \right\|_F\le \frac{\sqrt 2\left\| {L^{ - 1} } \right\|_2\left[ {\mathop {\inf }\limits_{D_{m + n} \in \mathbb{D}_{m + n} } \kappa_2 (LD_{m + n}^{ - 1} )} \right]}{{\sqrt 2  - 1 + \sqrt {1 - 2\left\| {L^{ - 1} } \right\|_2^2 \left\| {\Delta K} \right\|_F } }}{{\left\| {\Delta K} \right\|_F }}\\
&&\quad\quad\quad\le ( {2 + \sqrt 2 })\left\| {L^{ - 1} } \right\|_2\left[ {\mathop {\inf }\limits_{D_{m + n}  \in \mathbb{D}_{m + n} } \kappa_2(LD_{m + n}^{ - 1} )} \right]{{\left\| {\Delta K} \right\|_F }}.
\end{eqnarray}
\end{theorem}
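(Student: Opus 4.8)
The plan is to reduce everything to the matrix equation satisfied by the factor perturbation and then run the homotopy argument of Lemma 2.1 along the segment $K(t)=K+t\Delta K$, $t\in[0,1]$. Writing $J:=J_{m+n}$ for brevity, I would let $L(t)=L+\Delta L(t)$ be the generalized Cholesky factor of $K(t)$ (wherever it exists) and introduce the normalized unknown $W(t)=L^{-1}\Delta L(t)$, which is lower triangular since both $L^{-1}$ and $\Delta L(t)$ carry the block-lower-triangular structure. Subtracting $K=LJL^T$ from the identity $K(t)=L(t)JL(t)^T$ and multiplying on the left by $L^{-1}$ and on the right by $L^{-T}$ gives the symmetric matrix equation $G(t)=W(t)J+JW(t)^T+W(t)JW(t)^T$ with $G(t)=L^{-1}(t\Delta K)L^{-T}$.

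The first key step is to apply $\mathrm{up}(\cdot)$ to this equation. Since $WJ$ is lower triangular and $JW^T$ is upper triangular, a short computation using that the diagonal matrices $J$ and $\mathrm{diag}(W)$ commute shows the two diagonal contributions cancel, so $\mathrm{up}(WJ+JW^T)=JW^T$ and hence $JW(t)^T=\mathrm{up}\big(G(t)-W(t)JW(t)^T\big)$. Because $G(t)-WJW^T$ is symmetric, the refined inequality (2.3) together with (2.1) yields $\|W(t)\|_F\le\tfrac{1}{\sqrt2}\big(\|G(t)\|_F+\|W(t)\|_F^2\big)$, i.e. $\|W(t)\|_F^2-\sqrt2\,\|W(t)\|_F+\|G(t)\|_F\ge0$. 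Taking $a=1$, $b=\sqrt2$, $c(t)=\|G(t)\|_F=t\,\|L^{-1}\Delta K L^{-T}\|_F$, condition (3.1) forces $b^2-4ac(t)\ge 2-4\|L^{-1}\|_2^2\|\Delta K\|_F>0$ for all $t$, and $c(0)=x(0)=0$; Lemma 2.1 plus monotonicity in $c(1)$ then gives $\|W\|_F:=\|W(1)\|_F\le\tfrac{1}{\sqrt2}\big(1-\sqrt{1-2\|L^{-1}\|_2^2\|\Delta K\|_F}\big)<\tfrac{1}{\sqrt2}$.

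For the refined bound (3.3) the decisive trick is to estimate $\|D_{m+n}W\|_F$, not $\|\Delta L\|_F$, for an arbitrary $D_{m+n}\in\mathbb{D}_{m+n}$. From $W=\mathrm{up}(G-WJW^T)^T J$ and the scaling identity (2.4) one gets $\|D_{m+n}W\|_F=\|\mathrm{up}\big((G-WJW^T)D_{m+n}\big)\|_F\le\|(G-WJW^T)D_{m+n}\|_F$ by (2.2). Splitting off the linear part and using (2.1) gives $\|GD_{m+n}\|_F\le\|L^{-1}\|_2\|D_{m+n}L^{-1}\|_2\|\Delta K\|_F$, while the quadratic part obeys $\|WJW^TD_{m+n}\|_F\le\|W\|_F\|D_{m+n}W\|_F$; crucially this reproduces the \emph{same} quantity $\|D_{m+n}W\|_F$ with coefficient $\|W\|_F$ and \emph{no} extra condition number. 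Solving the resulting scalar inequality for $\|D_{m+n}W\|_F$ and then using $\|\Delta L\|_F=\|LD_{m+n}^{-1}\,D_{m+n}W\|_F\le\|LD_{m+n}^{-1}\|_2\|D_{m+n}W\|_F$ produces $\|\Delta L\|_F\le\kappa_2(LD_{m+n}^{-1})\|L^{-1}\|_2\|\Delta K\|_F/(1-\|W\|_F)$; substituting the bound on $\|W\|_F$ and taking the infimum over $D_{m+n}$ gives (3.3). Inequality (3.4) follows at once by discarding the nonnegative term $\sqrt{1-2\|L^{-1}\|_2^2\|\Delta K\|_F}$ in the denominator, since $\sqrt2/(\sqrt2-1)=2+\sqrt2$.

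The hard part, as usual in rigorous (rather than first-order) analyses, is not the algebra above but the existence statement (3.2): one must show that $K(t)$ actually admits a generalized Cholesky factorization for every $t\in[0,1]$, so that $W(t)$ and the quadratic inequality are defined along the whole segment. I would handle this by a continuity/connectedness argument — the set of admissible $t$ is nonempty (it contains $t=0$) and open, the factor depends continuously on $t$, and on this set the bound just derived forces $\|W(t)\|_F\le\tfrac{1}{\sqrt2}<1$, so that $L(t)=L\,(I+W(t))$ stays nonsingular and retains the block-lower-triangular form, preventing breakdown and allowing one to close the set and reach $t=1$. Checking that the perturbed blocks keep the sign/rank properties required for the generalized (indefinite) factorization, rather than the purely definite situation of ordinary Cholesky treated in [7], is the delicate point that condition (3.1) is designed to secure.
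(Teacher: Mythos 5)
Your proposal is correct, and its algebraic core coincides with the paper's proof: the same up-splitting $JW^T=\mathrm{up}\left(G-WJW^T\right)$ with $W=L^{-1}\Delta L$, the same use of (2.3) and Lemma 2.1 along the homotopy $K+t\Delta K$ to obtain $\|W\|_F\le \frac{1}{\sqrt 2}\left(1-\sqrt{1-2\|L^{-1}\|_2^2\|\Delta K\|_F}\right)$, and the same scaling trick via (2.4) and (2.2) --- your quantity $\|D_{m+n}W\|_F$ is exactly the paper's $\|\Delta L^T L^{-T}D_{m+n}\|_F$, and your recombination $\Delta L=LD_{m+n}^{-1}(D_{m+n}W)$ is just the transpose of the paper's $\Delta L^T=\Delta L^T L^{-T}D_{m+n}(L^{-T}D_{m+n})^{-1}$. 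The only genuine divergence is the existence statement (3.2), which you rightly identify as the delicate point: the paper does not run an open-closed connectedness argument but instead notes that (3.1) forces $\rho(L^{-1}t(\Delta K)L^{-T})<\frac12$ for every $t\in[0,1]$ and then invokes Theorem 2.2 of [2] (or Theorem 2.1 of [6]) and its proof to conclude that each $K+t\Delta K$ admits a generalized Cholesky factorization with factor $L(t)$ continuous in $t$. Your connectedness sketch is a legitimate self-contained substitute --- openness of the admissible set holds because positive definiteness of the leading block and of the (negated) Schur complement are open conditions, and closedness follows from your uniform bound $\|W(t)\|_F\le \frac{1}{\sqrt 2}<1$, which keeps limiting factors nonsingular with positive diagonal --- but as written it leaves precisely those verifications (continuity of the factor in $t$, preservation of the inertia and rank structure) as assertions; this is the content the paper outsources to [2] and [6]. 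In short: identical derivation of the bounds (3.3)--(3.4); for existence, your homotopy argument is a standard alternative to the paper's citation and would be complete once those details are filled in.
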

\begin{proof}
Using (2.1) and noting the condition (3.1), we have that for any $t \in \left[ {0,1} \right]$,
\begin{eqnarray*}
\rho({L^{ - 1} t(\Delta K)L^{ - T} })\leq\left\| {L^{ - 1} t(\Delta K)L^{ - T} } \right\|_F \leq \left\| {L^{ - 1} } \right\|_2^2 \left\| {\Delta K} \right\|_F  < \frac{1}{2}.
\end{eqnarray*}
Here, for any square matrix $X$, $\rho(X)$ denotes its spectral radius. Thus, from Theorem 2.2  in [2] and its proof or Theorem 2.1 in [6] and its proof, it follows that the matrix $K + t(\Delta K)$ has the following generalized Cholesky factorization
\begin{eqnarray}
K + t(\Delta K) = L(t)J_{m + n}L^T(t)=\left( {L + \left( \Delta L(t) \right)} \right)J_{m + n} \left( {L + \left( \Delta L(t) \right)} \right)^T,
\end{eqnarray}
where $L(t)$ is lower triangular having the same structure as that of $L$ in (1.2) and $\Delta L\left( t \right)=L(t)-L$ with $ \Delta L\left(0 \right)=0$. Setting $\Delta L\left( 1 \right) = \Delta L$ in (3.5) gives (3.2).

In the following, we consider (3.3) and (3.4). Observing (1.2), it follows from (3.5) that
\begin{eqnarray*}
t(\Delta K) = LJ_{m + n} (\Delta L (t) )^T+ (\Delta L( t))J_{m + n} L^T  + (\Delta L( t ))J_{m + n}( \Delta L(t))^T.
\end{eqnarray*}
Left-multiplying the above equation by  $L^{ - 1}$ and right-multiplying it by  $L^{ - T}$ gives
\begin{align}
&J_{m + n} (\Delta L( t))^T L^{ - T}  + L^{ - 1} \left(\Delta L(t)  \right)J_{m + n}= tL^{ - 1} (\Delta K)L^{ - T}- L^{ - 1} \left( \Delta L(t) \right)J_{m + n} (\Delta L ( t))^TL^{ - T}.
\end{align}
Since $J_{m + n} \left(\Delta L ( t )\right)^TL^{ - T}$ is upper triangular, using the symbol "up," from (3.6), we have
\begin{eqnarray}
J_{m + n} (\Delta L(t))^TL^{ - T}  = {\rm up}\left[ {tL^{ - 1} (\Delta K)L^{ - T}  - L^{ - 1}\left( \Delta L( t) \right)J_{m + n} (\Delta L( t))^TL^{ - T} } \right].
\end{eqnarray}
Taking the Frobenius norm on (3.7) and considering (2.3) and (2.1) leads to
\begin{eqnarray*}
&&\left\| {L^{ - 1}\left(\Delta L( t) \right) } \right\|_F = \left\|J_{m + n}  {(\Delta L (t))^TL^{ - T} } \right\|_F  \le \frac{1}{{\sqrt 2 }}\left\| {tL^{ - 1}( \Delta K)L^{ - T}  -  L^{ - 1} \left( \Delta L(t) \right)J_{m + n} (\Delta L ( t))^TL^{ - T} } \right\|_F\\
&&\quad\quad\quad\quad\quad\quad\le \frac{1}{{\sqrt 2 }}\left( {\left\| {{L^{ - 1}}} \right\|_2^2{{\left\| {\Delta K} \right\|}_F}t + \left\| {{L^{ - 1}} \left(\Delta L( t) \right)} \right\|_F^2} \right).
\end{eqnarray*}
Let $x(t)=\left\| {{L^{ - 1}}\left( \Delta L(t) \right)} \right\|_F$ and $c(t)=\left\| {{L^{ - 1}}} \right\|_2^2{\left\| {\Delta K} \right\|_F}t$. It is easy to find that both $x(t)$ and $c(t)$ are continuous with respect to $t$. Moreover,
\begin{eqnarray*}
x^2(t)-\sqrt{2}x(t)+c(t) \ge 0.
\end{eqnarray*}
From (3.1), it is seen that for any $t \in \left[ {0,1} \right]$, $ 2 - 4c(t)> 0$.
Meanwhile, $x(0)=0$ and $c(0) = 0$. These facts mean that all the conditions of Lemma 2.1 hold. Thus, by Lemma 2.1, we have
\begin{align}
\left\| {L^{ - 1} (\Delta L)} \right\|_F =x(1) \le  \frac{1}{{2 }}\left( {\sqrt{2} - \sqrt {2 - 4c(1) } } \right)=\frac{1}{{\sqrt 2 }}\left( {1 - \sqrt {1 - 2\left\| {L^{ - 1} } \right\|_2^2 \left\| {\Delta K} \right\|_F } } \right).
\end{align}

Now we introduce a scaling matrix ${D_{m + n}} \in {\mathbb{D}_{m + n}}$ into the expression (3.7) with $t=1$, which can be used to improve the bounds. Right-multiplying (3.7) with $t=1$ by $D_{m + n}$ and using (2.4) yields
\begin{align}
J_{m + n}(\Delta L)^T L^{ - T} D_{m + n}  = {\rm up}\left( { L^{ - 1} (\Delta K)L^{ - T} D_{m + n}  - L^{ - 1} (\Delta L)J_{m + n} (\Delta L)^T L^{ - T} D_{m + n} } \right).
\end{align}
Taking the Frobenius norm on (3.9) and noting (2.2) and (2.1), we get
\begin{eqnarray*}
{\left\| {(\Delta L)^T{L^{ - T}}{D_{m + n}}} \right\|_F} \le {\left\| {{L^{ - 1}}} \right\|_2}{\left\| {{L^{ - T}}{D_{m + n}}} \right\|_2}{\left\| {\Delta K} \right\|_F} + {\left\| {{L^{ - 1}}\Delta L} \right\|_F}{\left\| {(\Delta L)^T{L^{ - T}}{D_{m + n}}} \right\|_F},
\end{eqnarray*}
which combined with (3.8) gives
\begin{eqnarray}
{\left\| {(\Delta {L)^T}{L^{ - T}}{D_{m + n}}} \right\|_F} \le \frac{{{{\left\| {{L^{ - 1}}} \right\|}_2}{{\left\| {{L^{ - T}}{D_{m + n}}} \right\|}_2}{{\left\| {\Delta K} \right\|}_F}}}{1-\left\| {L^{ - 1}}\Delta L \right\|_F}\le \frac{{\sqrt 2 {{\left\| {{L^{ - 1}}} \right\|}_2}{{\left\| {{L^{ - T}}{D_{m + n}}} \right\|}_2}{{\left\| {\Delta K} \right\|}_F}}}{{\sqrt 2  - 1 + \sqrt {1 - 2\left\| {{L^{ - 1}}} \right\|_2^2{{\left\| {\Delta K} \right\|}_F}} }}.
\end{eqnarray}
Note that $(\Delta L)^T = (\Delta L)^TL^{ - T} D_{m + n} (L^{ - T} D_{m + n})^{-1}$. Then, from (3.10), we have
\begin{eqnarray*}
{\left\| {\Delta L} \right\|_F} = {\left\| {(\Delta {L)^T}} \right\|_F} \le \frac{{\sqrt 2 {{\left\| {{L^{ - 1}}} \right\|}_2}{{\left\| {{L^{ - T}}{D_{m + n}}} \right\|}_2}{{\left\| {{{({L^{ - T}}{D_{m + n}})}^{ - 1}}} \right\|}_2}{{\left\| {\Delta K} \right\|}_F}}}{{\sqrt 2  - 1 + \sqrt {1 - 2\left\| {{L^{ - 1}}} \right\|_2^2{{\left\| {\Delta K} \right\|}_F}} }}.
\end{eqnarray*}
Since
\begin{eqnarray*}
&&\left\| {L^{ - T} D_{m + n} } \right\|_2 \left\| (L^{ - T} D_{m + n})^{-1}\right\|_2 =\left\| (D_{m + n}^{-1}L^{ T})^{-1} \right\|_2 \left\|  D_{m + n}^{-1}L^{ T}\right\|_2=\kappa_2( D_{m + n}^{-1}L^{ T})=\kappa_2(L D_{m + n}^{-1})
\end{eqnarray*}
and  $D_{m + n}  \in \mathbb{D}_{m + n}$ is arbitrary, we have the bound (3.3). The bound (3.4) follows from (3.3). \end{proof}

Now we give some remarks on this theorem, which are analogous to those in [7] on [7, Theorem 3.1].

\begin{remark}{\em
Taking the infimum of the expression below (21) in [2] over the set $\mathbb{D}_{m + n}$ and adding the higher-order term, we can derive the following first-order perturbation bound:
\begin{eqnarray}
\left\| {\Delta L} \right\|_F \le \left\| {L^{ - 1} } \right\|_2\left[ {\mathop {\inf }\limits_{{D_{m + n}}{ \in \mathbb{D}_{m + n}}} \kappa_2 (LD_{m + n}^{ - 1})} \right]{{{{\left\| {\Delta K} \right\|}_F}}} + O\left( {{{\left\| {\Delta K} \right\|_F^2}}} \right).
\end{eqnarray}
It is easy to find that the difference between the first-order bound (3.11) and the rigorous bound (3.4) is a factor of $2 + \sqrt 2$.}
\end{remark}

\begin{remark}{\em In [3, Theorem 2.3], the author obtained the following rigorous perturbation bound by the classic matrix equation approach:
\begin{eqnarray}
{{\left\| {\Delta L} \right\|_F }}\le \frac{{\sqrt 2 \left\| {L^{ - 1} } \right\|_2\kappa_2(L) \left\| {\Delta K} \right\|_F }}{{1 + \sqrt {1 - 2\left\| {L^{ - 1} } \right\|_F^2 \left\| {\Delta K} \right\|_F } }}
\end{eqnarray}
under the condition
\begin{equation*}
\left\| {L^{ - 1} } \right\|_F^2 \left\| {\Delta K} \right\|_F  < \frac{1}{2}.
\end{equation*}
The bound (3.12) is a little larger than
\begin{eqnarray}
{{\left\| {\Delta L} \right\|_F }} \le \frac{{\sqrt 2 \left\| {L^{ - 1} } \right\|_2\kappa_2(L) }}{{1 + \sqrt {1 - 2\left\| {L^{ - 1} } \right\|_2^2 \left\| {\Delta K} \right\|_F } }}{{\left\| {\Delta K} \right\|_F }}.
\end{eqnarray}
Setting $D_{m + n}  = I_{m + n}$ in (3.3) gives
\begin{eqnarray}
{{\left\| {\Delta L} \right\|_F }} \le \frac{{\sqrt 2 \left\| {L^{ - 1} } \right\|_2\kappa_2(L)  }}{{\sqrt 2  - 1 + \sqrt {1 - 2\left\| {L^{ - 1} } \right\|_2^2 \left\| {\Delta K} \right\|_F } }}{{\left\| {\Delta K} \right\|_F }}.
\end{eqnarray}
In comparison, we can find that the bound (3.14) is at most $\sqrt 2  + 1$ times as large as the bound (3.13). However, ${\mathop {\inf }\limits_{D_{m + n}  \in \mathbb{D}_{m + n} } \kappa_2\left( L{D_{m + n}^{ - 1} } \right)} $ can be arbitrarily smaller than
$\kappa_2(L)$ when $L$ has bad column scaling. For example, let $L=\left[{\begin{array}{*{20}c}
   1/\gamma & 0 \\
    1 & 1 \\
\end{array}} \right]$ with $0<\gamma  \ll1$. Then $\kappa_2\left( L{D_{m + n}^{ - 1} } \right) = \Theta(1)$ with $D_{m + n}  = {\rm diag}(1/\gamma,1)$, and $\kappa_2\left( L \right) = \Theta\left( 1/\gamma\right)$. Therefore, the bound (3.3) can be much sharper than (3.13) and hence (3.12).}
\end{remark}

\begin{remark}{\em The following rigorous perturbation bound given in [6, Theorem 3.1] was derived by the matrix-vector equation approach:
\begin{eqnarray}
{{\left\| {\Delta L} \right\|_F }} \le 2\left\| {\widehat{W}_{JL^T }^{ - 1} } \right\|_2 {{\left\| {\Delta K} \right\|_F }},
\end{eqnarray}
where ${\widehat{W}_{JL^T }} $ is a $\frac{{(m+n)(m+n + 1)}}{2} \times \frac{{(m+n)(m+n + 1)}}{2}$ lower triangular matrix defined by the elements of $JL^T $, under the condition
\begin{eqnarray*}
\left\| {\widehat{W}_{JL^T }^{ - 1} } \right\|_2 \left\| {\widehat{W}_{JL^T }^{ - 1} {\rm duvec}\left( {\Delta K} \right)} \right\|_2  < \frac{1}{4},
\end{eqnarray*}
which can be as bad as
\begin{eqnarray}
\left\| {\widehat{W}_{JL^T }^{ - 1} } \right\|_2^2 \left\| {\Delta K} \right\|_F  < \frac{1}{4}.
\end{eqnarray}
Please see [6, 9] for the specific structure of the matrix ${\widehat{W}_{JL^T }}$ and the definition of "duvec."

Numerical experiments indicated that the bound (3.15) is a little tighter than (3.4), however, it is not much tighter than (3.4). But the condition (3.16) can be much stronger than (3.1). For example, let {\small$L=\left[{\begin{array}{*{20}c}
   1 & 0 \\
   \gamma & 1 \\
\end{array}} \right]$} with $\gamma\gg 1$. Then $\left\| {\widehat{W}_{JL^T }^{ - 1} } \right\|_2^2 \approx \Theta\left( {\gamma ^4} \right)$ and $\left\| L^{ - 1} \right\|_2^2 = \Theta(\gamma^2)$. Moreover, it is more expensive to compute the bound (3.15) than that of (3.4).}
\end{remark}

\begin{remark}{\em The following rigorous perturbation bound was derived by the refined matrix equation approach [6, Theorem 3.2]:
\begin{eqnarray}
&&{{\left\| {\Delta L} \right\|_F }} \le \frac{{2\left\| L \right\|_2\kappa_2\left( L \right)\kappa_2\left( L{D_{m + n}^{ - 1}  } \right)\frac{{\left\| {\Delta K} \right\|_F }}{{\left\| K \right\|_2 }}}}{{1 + \sqrt {1 - 4\kappa_2\left( L \right)\left\| L \right\|_2 \left\| D_{m + n}  L^{ - 1} \right\|_2 \left\| {D_{m + n}^{ - 1} } \right\|_2 \frac{{\left\| {\Delta K} \right\|_F }}{{\left\| K \right\|_2 }}} }}\nonumber\\
&&\quad\quad\quad\ \ \leq2\left\| {L } \right\|_2\kappa_2\left( L \right)\kappa_2\left( L{D_{m + n}^{ - 1}  } \right)\frac{{\left\| {\Delta K} \right\|_F }}{{\left\| K \right\|_2 }}
\end{eqnarray}
under the condition
\begin{eqnarray}
\kappa_2\left( L \right)\left\| L \right\|_2 \left\| { D_{m + n} L^{ - 1} } \right\|_2 \left\| {D_{m + n}^{ - 1} } \right\|_2 \frac{{\left\| {\Delta K} \right\|_F }}{{\left\| K \right\|_2 }} < \frac{1}{4}.
\end{eqnarray}
Minimizing $\kappa_2\left( L{D_{m + n}^{ - 1}  } \right)$ over the set $\mathbb{D}_{m + n}$, we can make the bound (3.17) similar to the new bound (3.4). However, noting the facts $\left\| K \right\|_2\leq\left\| L \right\|_2^2$ and $\left\| { D_{m + n} L^{ - 1} } \right\|_2\geq\left\|  L^{ - 1}  \right\|_2/\left\| {D_{m + n}^{ - 1} } \right\|_2$, we have
\begin{eqnarray*}
\kappa_2\left( L \right)\left\| L \right\|_2 \left\| { D_{m + n} L^{ - 1} } \right\|_2 \left\| {D_{m + n}^{ - 1} } \right\|_2 \frac{{\left\| {\Delta K} \right\|_F }}{{\left\| K \right\|_2 }}  \ge \left\| L^{-1} \right\|_2^2\left\| {\Delta K} \right\|_F.
\end{eqnarray*}
Therefore, the condition (3.18) is not only complicated but also more constraining than (3.1). Especially, when we minimize the bound (3.17) over the set $\mathbb{D}_{m + n}$, the best choice of $D\in \mathbb{D}_{m+n}$ may make the condition (3.18) worse.}
\end{remark}

From the above discussions, we can find that the bounds given in Theorem 3.1 have more advantages compared with the existing ones.

\section{Rigorous perturbation bounds with componentwise perturbation}

According to Algorithm 1 for computing the generalized Cholesky factorization given in [1], [1, Eqns. (8)--(10)], and [10, Lemma 8.4 and Theorem 10.3],  we have that if
\begin{eqnarray*}
K + \Delta K = \widetilde L{J_{m + n}}{\widetilde L^T},\quad \widetilde{L}=\left(
                                                                             \begin{array}{cc}
                                                                               \widetilde{L}_{11} & 0 \\
                                                                               \widetilde{L}_{21}&  \widetilde{L}_{22} \\
                                                                             \end{array}
                                                                           \right),
\quad \Delta K=\left(
                                                                   \begin{array}{cc}
                                                                     \Delta A & (\Delta B)^T \\
                                                                     \Delta B & \Delta C \\
                                                                   \end{array}
                                                                 \right),
\end{eqnarray*}
then
\begin{eqnarray*}
&&|\Delta A| \leq \gamma_{3m+1}|\widetilde{L}_{11}||\widetilde{L}_{11}^T|,\
|\Delta B| \leq \gamma_{3m+1}|\widetilde{L}_{21}||\widetilde{L}_{11}^T|,\
|\Delta C| \leq\gamma_{3n+1}|\widetilde{L}_{22}||\widetilde{L}_{22}^T|,
\end{eqnarray*}
where $\gamma_i=iu/(1-iu)$ and $u$ is the unit roundoff.
Thus, the computed generalized Cholesky factor $\widetilde{L}$ satisfies
\begin{eqnarray}
K + \Delta K = \widetilde L{J_{m + n}}{\widetilde L^T},\quad |{\Delta K} | \le \varepsilon | {\widetilde L^T} || \widetilde {L} |,
\end{eqnarray}
where $\varepsilon={\min}\{\gamma_{3m+1}, \gamma_{3n+1}\}$.

In the following, we consider the rigorous perturbation bounds for the generalized Cholesky factorization with the perturbation $\Delta K$ having the same form as in (4.1).

\begin{theorem}
Let $\Delta K \in {\mathbb{R}^{\left( {m + n} \right) \times \left( {m + n} \right)}}$ be a symmetric perturbation in $K \in {\mathbb{R}^{\left( {m + n} \right) \times \left( {m + n} \right)}}$ which has the same form as that in {\rm (1.1)}, however, does not necessarily have the generalized Cholesky factorization, and $K + \Delta K$ have the generalized Cholesky factorization {\rm(4.1)}. If
\begin{eqnarray}
{\mathop{\rm cond}_F\nolimits} ( {\widetilde L} ){\mathop{\rm cond}_F\nolimits} ( {{{\widetilde L}^{ - T}}} )\varepsilon  < \frac{1}{2},
\end{eqnarray}
then $K$ has the generalized Cholesky factorization $K = LJ_{m + n} L^T$, where $L=\widetilde L-\Delta L$. Moreover,
\begin{eqnarray}
&&{{{{\left\| {\Delta L} \right\|}_2}}}\le \frac{{\sqrt 2{{\left(\mathop {\inf }\limits_{{D_{m + n}} \in {\mathbb{D}_{m + n}}} {{\left\| {\widetilde LD_{m + n}^{ - 1}} \right\|}_2}{{\left\| {{D_{m + n}}|{{\widetilde L}^{ - 1}}|| {\widetilde L} |} \right\|}_2}\right){\mathop{\rm{cond}}_F\nolimits} ( {\widetilde L} )\varepsilon }}}}{{\sqrt 2  - 1 + \sqrt {1 - 2{\rm{cond}}_F( {\widetilde L} ){\rm{cond}}_F( {{{\widetilde L}^{ - T}}} )\varepsilon } }}\\
&&\quad\quad\quad\ \le {{( {2 + \sqrt 2 } )\left(\mathop {\inf }\limits_{{D_{m + n}} \in {\mathbb{D}_{m + n}}} {{\left\| {\widetilde LD_{m + n}^{ - 1}} \right\|}_2}{{\left\| {{D_{m + n}}|{{\widetilde L}^{ - 1}}|| {\widetilde L} |} \right\|}_2}\right){\mathop{\rm{cond}}_F\nolimits} ( {\widetilde L} \textbf{})\varepsilon }}.
\end{eqnarray}
\end{theorem}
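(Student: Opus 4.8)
The plan is to reproduce the homotopy argument of Theorem 3.1, but now running the deformation from the \emph{known} factor $\widetilde L$ of $K+\Delta K$ back to the \emph{sought} factor $L$ of $K$. I would set $M(t)=\widetilde L J_{m+n}\widetilde L^T-t\,\Delta K=(K+\Delta K)-t\,\Delta K$ for $t\in[0,1]$, so that $M(0)=K+\Delta K$ and $M(1)=K$, and factor out $\widetilde L$ as $M(t)=\widetilde L\big(J_{m+n}-t\,\widetilde L^{-1}\Delta K\widetilde L^{-T}\big)\widetilde L^T$. The single quantity to control is then $E:=\widetilde L^{-1}\Delta K\widetilde L^{-T}$ in the Frobenius norm: it both guarantees the factorization $M(t)=L(t)J_{m+n}L(t)^T$ along the whole path (by Theorem 2.2 of [2] / Theorem 2.1 of [6], since $\rho(tE)\le\|tE\|_F$, exactly as in Theorem 3.1) and later feeds the quadratic inequality. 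Writing $L(t)=\widetilde L+\delta(t)$ with $\delta(0)=0$, the factor at $t=1$ is $L$, and $\|\delta(1)\|=\|\Delta L\|$.

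The decisive step, and the one I expect to be the main obstacle, is converting the \emph{componentwise} bound on $\Delta K$ into the \emph{normwise} quantity $\mathrm{cond}_F$. From $|\Delta K|\le\varepsilon\,|\widetilde L|\,|\widetilde L^T|$ in (4.1) and monotonicity of $|\cdot|$ and $\|\cdot\|_F$,
\[
|E|\le\varepsilon\,|\widetilde L^{-1}|\,|\widetilde L|\,|\widetilde L^T|\,|\widetilde L^{-T}|=\varepsilon\,G\,G^T,\qquad G:=|\widetilde L^{-1}|\,|\widetilde L|,
\]
the crucial point being the pairing: the two inner factors collapse because $|\widetilde L^T|\,|\widetilde L^{-T}|=(|\widetilde L^{-1}|\,|\widetilde L|)^T=G^T$, so each inverse sits beside its own absolute value and the large cross term $|\widetilde L^T|\,|\widetilde L|$ never appears. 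Hence
\[
\|E\|_F\le\varepsilon\,\|GG^T\|_F\le\varepsilon\,\|G\|_2\,\|G\|_F\le\varepsilon\,\|G\|_F^2=\varepsilon\,\mathrm{cond}_F(\widetilde L)\,\mathrm{cond}_F(\widetilde L^{-T}),
\]
using $\|G\|_F=\mathrm{cond}_F(\widetilde L)=\mathrm{cond}_F(\widetilde L^{-T})$. In particular (4.2) forces $\|E\|_F<\tfrac12$, which is exactly what the existence argument and Lemma 2.1 require. Getting this grouping right is the whole game: the mispaired estimate $\varepsilon\big\||\widetilde L^{-1}|\,|\widetilde L^T|\,|\widetilde L|\,|\widetilde L^{-T}|\big\|_F$ can be far larger than $\mathrm{cond}_F(\widetilde L)^2$, so the order of the absolute-value factors must be tracked scrupulously.

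With this estimate the rest follows the template of Theorem 3.1. Left/right multiplying $-t\,\Delta K=\widetilde L J_{m+n}\delta(t)^T+\delta(t)J_{m+n}\widetilde L^T+\delta(t)J_{m+n}\delta(t)^T$ by $\widetilde L^{-1}$ and $\widetilde L^{-T}$ and applying $\mathrm{up}(\cdot)$ — the resulting bracket is symmetric, so (2.3) contributes the factor $1/\sqrt2$ — gives, for $x(t)=\|\widetilde L^{-1}\delta(t)\|_F$, the inequality $x^2(t)-\sqrt2\,x(t)+t\|E\|_F\ge0$. Lemma 2.1 together with the estimate above and the monotonicity of $c\mapsto1-\sqrt{1-2c}$ yields the unscaled bound
\[
\|\widetilde L^{-1}\Delta L\|_F=x(1)\le\tfrac1{\sqrt2}\Big(1-\sqrt{1-2\,\mathrm{cond}_F(\widetilde L)\,\mathrm{cond}_F(\widetilde L^{-T})\,\varepsilon}\Big),
\]
which will supply the denominator of (4.3).

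Finally I would insert a scaling $D_{m+n}\in\mathbb{D}_{m+n}$ as in (3.9)–(3.10): right-multiply the $\mathrm{up}$-equation at $t=1$ by $D_{m+n}$, take Frobenius norms, and bound the data term by the scaled form of the key estimate, namely $\|E\,D_{m+n}\|_F\le\varepsilon\,\mathrm{cond}_F(\widetilde L)\,\|D_{m+n}\,|\widetilde L^{-1}|\,|\widetilde L|\|_2$ (from $\varepsilon\,GG^TD_{m+n}$ and $\|G^TD_{m+n}\|_2=\|D_{m+n}G\|_2$). Dividing by $1-\|\widetilde L^{-1}\Delta L\|_F$ and substituting the unscaled bound produces the denominator $\sqrt2-1+\sqrt{1-2\,\mathrm{cond}_F(\widetilde L)\,\mathrm{cond}_F(\widetilde L^{-T})\,\varepsilon}$. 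The passage to $\|\Delta L\|_2$ is the identity $(\Delta L)^T=\big((\Delta L)^T\widetilde L^{-T}D_{m+n}\big)\big(D_{m+n}^{-1}\widetilde L^T\big)$, which contributes $\|D_{m+n}^{-1}\widetilde L^T\|_2=\|\widetilde L D_{m+n}^{-1}\|_2$; taking the infimum over $D_{m+n}$ gives (4.3), and bounding the denominator below by $\sqrt2-1$ gives (4.4) with constant $2+\sqrt2$.
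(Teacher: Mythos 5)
Your proposal is correct and follows essentially the same route as the paper's own proof: the homotopy $(K+\Delta K)-t\,\Delta K$ with existence via [2]/[6], the ``up'' equation feeding Lemma 2.1, the key pairing $\bigl(|\widetilde L^{-1}||\widetilde L|\bigr)\bigl(|\widetilde L^{T}||\widetilde L^{-T}|\bigr)$ to convert the componentwise bound into $\mathrm{cond}_F$ quantities, and the scaled estimate with $D_{m+n}$ yielding (4.3)--(4.4). The only (harmless) cosmetic difference is your observation that $\mathrm{cond}_F(\widetilde L)=\mathrm{cond}_F(\widetilde L^{-T})=\| G\|_F$, whereas the paper keeps the two factors separate.
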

\begin{proof}
From (2.1) and (4.1),  for any $t \in \left[ {0,1} \right]$, we have
\begin{eqnarray*}
&&\left\| {{\widetilde L^{ - 1}}t(\Delta K){\widetilde L^{ - T}}} \right\|_F \le \left\|{| {\widetilde L^{ - 1}}|| {\widetilde L} || {{{\widetilde L^T}}} ||{\widetilde L^{ -T}}|} \right\|_F\varepsilon
\le {\mathop{\rm{cond}}_F\nolimits}( {\widetilde L} ){\mathop{\rm{cond}}_F\nolimits}( {{{\widetilde L}^{-T}}})\varepsilon < \frac{1}{2}.
\end{eqnarray*}
Thus, considering the proof of Theorem 3.1, for any $t \in \left[ {0,1} \right]$, $\left( {K + \Delta K} \right) - t(\Delta K) $ has the following generalized Cholesky factorization
\begin{eqnarray}
\left( {K + \Delta K} \right) - t(\Delta K)=\widetilde L (t) J_{m + n}\widetilde L^T (t)= ( {\widetilde L - (\Delta L( t ))} ){J_{m + n}}{( {\widetilde L - ( \Delta L(t) )} )^T},
\end{eqnarray}
where $\widetilde L (t)$ is lower triangular having the same structure as that of $L$ in (1.2) and $\Delta L( t )=\widetilde L-\widetilde L(t)$ with $\Delta L( 0)=0$. Setting $\Delta L\left( 1 \right) = \Delta L$ in (4.5) leads to the generalized Cholesky factorization of $K$ as in (1.2).

Next, we prove (4.3) and (4.4). The proof is similar to the one for the bounds (3.3) and (3.4). Considering (4.1), from (4.5), we have
\begin{eqnarray}
{J_{m + n}}(\Delta {L}( t ))^T{\widetilde L^{ - T}} = {\mathop{\rm up}\nolimits} \left[{t{{\widetilde L}^{ - 1}}(\Delta K){{\widetilde L}^{ - T}} + {{\widetilde L}^{ - 1}}(\Delta L( t )){J_{m + n}}(\Delta {L}( t ))^T{{\widetilde L}^{ - T}}} \right].
\end{eqnarray}
Taking the Frobenius norm on (4.6), and using (2.3), (2.1) and the bound of $\left| {\Delta K} \right|$ in (4.1) gives
\begin{eqnarray*}
&&{\left\| {{{\widetilde L}^{ - 1}}(\Delta L( t ))} \right\|_F}={\left\|{J_{m + n}} {(\Delta {L}( t ))^T{{\widetilde L}^{ - T}}} \right\|_F} \le \frac{1}{{\sqrt 2 }}\left[ {{\rm{cond}}_F(\widetilde L){\rm{cond}}_F({{\widetilde L}^{ - T}})\varepsilon t + \left\| {{{\widetilde L}^{ - 1}}(\Delta L(t))} \right\|_F^2} \right].
\end{eqnarray*}
That is,
\begin{eqnarray*}
 x^2(t)-\sqrt{2}x(t)+c(t) \ge 0,
\end{eqnarray*}
where $x(t)=\left\| {{{\widetilde L}^{ - 1}}(\Delta L( t ))} \right\|_F$ and $c(t)={\mathop{\rm{cond}}_F\nolimits}( {\widetilde L} ){\mathop{\rm{cond}}_F\nolimits}({\widetilde L^{ - T}}{\rm{ )}}\varepsilon t$.
Considering (4.2), we can check that the conditions of Lemma 2.1 hold.
Thus,
\begin{eqnarray}
{\left\| {{{\widetilde L}^{ - 1}}(\Delta L)} \right\|_F} \le \frac{1}{{\sqrt 2 }}\left({1 - \sqrt {1 - 2{\mathop{\rm{cond}}_F\nolimits}( {\widetilde L} ){\mathop{\rm{cond}}_F\nolimits}( {{{\widetilde L}^{-T}}} )\varepsilon } }\right).
\end{eqnarray}

Taking $t=1$ in (4.6) and considering (2.4), for any ${D_{m + n}} \in {{\mathbb{D}}_{m + n}}$, we have
\begin{align}
{J_{m + n}}(\Delta {L)^T}{\widetilde L^{ - T}}{D_{m + n}} = {\rm{up}}\left( {{{\widetilde L}^{ - 1}}(\Delta K){{\widetilde L}^{ - T}}{D_{m + n}} + {{\widetilde L}^{ - 1}}(\Delta L){J_{m + n}}(\Delta {L)^T}{{\widetilde L}^{ - T}}{D_{m + n}}} \right).
\end{align}
Taking the Frobenius norm on (4.8) and using (2.2) and (2.1) yields
\begin{eqnarray*}
&&{\left\| {{D_{m + n}}{{\widetilde L}^{ - 1}}(\Delta L)} \right\|_F} \le {\left\| {{{\widetilde L}^{ - 1}}(\Delta K){{\widetilde L}^{ - T}}}{D_{m + n}} \right\|_F} + {\left\| {{{\widetilde L}^{ - 1}}(\Delta L)} \right\|_F}{\left\| {{D_{m + n}}{{\widetilde L}^{ - 1}}(\Delta L)} \right\|_F}.
\end{eqnarray*}
Further, substituting the bound of $\left| {\Delta K} \right|$ in (4.1) into the above inequality leads to
\begin{eqnarray*}
&&{\left\| {{D_{m + n}}{{\widetilde L}^{ - 1}}(\Delta L)} \right\|_F} \le {\left\| {{{\widetilde L}^{ - 1}}
| {\widetilde L} || {{{\widetilde L}^T}} |{{\widetilde L}^{ - T}}{D_{m + n}}} \right\|_F} \varepsilon + {\left\| {{{\widetilde L}^{ - 1}}(\Delta L)} \right\|_F}{\left\| {{D_{m + n}}{{\widetilde L}^{ - 1}}(\Delta L)} \right\|_F}\\
&&\quad\quad\quad\quad\quad\quad\quad\le {\left\| {{D_{m + n}}|{{\widetilde L}^{ - 1}}|| {\widetilde L} |} \right\|_2} {\mathop{\rm{cond}}_F\nolimits}( {{{\widetilde L}}} )\varepsilon + {\left\| {{{\widetilde L}^{ - 1}}(\Delta L)} \right\|_F}{\left\| {{D_{m + n}}{{\widetilde L}^{ - 1}}(\Delta L)} \right\|_F}.
\end{eqnarray*}
Noting (4.7), we obtain
\begin{eqnarray*}
{\left\| {{D_{m + n}}{{\widetilde L}^{ - 1}}(\Delta L)} \right\|_F} \le \frac{{\sqrt 2 {{\left\| {{D_{m + n}}|{{\widetilde L}^{ - 1}}|| {\widetilde L} |} \right\|}_2}{\mathop{\rm{cond}}_F\nolimits}( {{{\widetilde L}}} )\epsilon}}{{\sqrt 2  - 1 + \sqrt {1 - 2{\mathop{\rm{cond}}_F\nolimits}( {\widetilde L} ){\mathop{\rm{cond}}_F\nolimits}( {{{\widetilde L}^{-T}}} )\varepsilon } }},
\end{eqnarray*}
which combined with the fact
\begin{eqnarray*}
{\left\| {\Delta L} \right\|_F} \le {\left\| {\widetilde LD_{m + n}^{ - 1}} \right\|_2}{\left\| {{D_{m + n}}{{\widetilde L}^{ - 1}}(\Delta L)} \right\|_F}
\end{eqnarray*}
implies the bound (4.3) and then (4.4).
\end{proof}

\begin{remark}{\em The following first-order perturbation bound can be derived from (4.3),
\begin{eqnarray}
&&{{{{\left\| {\Delta L} \right\|}_2}}}\le {{\left(\mathop {\inf }\limits_{{D_{m + n}} \in {\mathbb{D}_{m + n}}} {{\left\| {\widetilde LD_{m + n}^{ - 1}} \right\|}_2}{{\left\| {{D_{m + n}}|{{\widetilde L}^{ - 1}}|| {\widetilde L} |} \right\|}_2}\right){\mathop{\rm{cond}}_F\nolimits} ( {\widetilde L} \textbf{})\varepsilon }}+O(\varepsilon^2).
\end{eqnarray}
In addition, it is worthy pointing out that the perturbation bounds obtained in this section are unlike the ones in Section 3. These bounds involve the generalized Cholesky factor of $K+\Delta K$ but not the one of $K$. This is because the bound of $|\Delta K|$ in (4.1) involves the generalized Cholesky factor of $K+\Delta K$.}
\end{remark}
\section{Concluding remarks}

In this paper, some new rigorous perturbation bounds for the generalized Cholesky factorization with normwise or componentwise perturbations in the given matrix are obtained. These bounds not only have simple and moderate conditions but also can be much smaller than some existing ones. To estimate these bounds efficiently, the suitable scaling matrix $D$ is needed. In [9], the author provided some methods, which are also applicable to the bounds in this paper. Please refer to [9] for detail on these methods.

\end{document}